\newcommand{\f}{\mathbbmsl{F}}
\newcommand{\ie}{\textit{i.e.\@\xspace, }}
\newcommand{\ti}{\times}
\newcommand{\wh}[1]{\widehat{#1}}
\newcommand{\ra}{\rightarrow}
\newcommand{\ann}{\mathrm{Ann}\hspace{.01em}}
\newcommand{\w}{\mathbb{Z^{\geqslant \text{0}}}}
\newcommand{\ord}{\mathrm{Ord}\hspace{.01em}}
\newcommand{\ai}{\textit{\textbf{If}}\kern.5em}
\newcommand{\krn}{\mathrm{Krn}\hspace{.01em}}
\newcommand{\cq}{\coloneqq}
\newcommand{\anh}{\mathrm{Anh}\hspace{.01em}}
\newcommand{\cw}[1]{\textcolor{white}{#1}}
\newcommand{\dt}{\dots}
\newcommand{\dps}{\displaystyle}
\newcommand{\e}{\epsilon}
\newcommand{\so}{\stackon}
\newcommand{\su}{\stackunder}
\newcommand{\sz}{\scriptsize}
\newcommand{\stack}[3]{\so[8pt]{\su[8pt]{#1}{\cw{\sz{$#2$}}}}{\cw{\sz{$#3$}}}}
\newcommand{\g}{\gamma}
\newcommand{\op}{\oplus}
\renewcommand{\u}{\mathscr{U}}
\renewcommand{\a}{\alpha}
\renewcommand{\sup}{\mathrm{Spr}\hspace{.01em}}
\renewcommand{\dim}{\mathrm{Dmn}\hspace{.01em}}
\renewcommand{\c}{\mathscr{C}}
\renewcommand{\b}{\beta}
\renewcommand{\d}{\Delta}
\DeclareMathAlphabet{\mathbbmsl}{U}{bbm}{m}{sl}
\DeclareSymbolFont{letters}{OML}{ztmcm}{m}{it}
\DeclareMathAlphabet{\mathcalligra}{T1}{calligra}{m}{n}
\DeclareFontShape{T1}{calligra}{m}{n}{<->s*[2.2]callig15}{}
\numberwithin{equation}{section}
\newtheorem{The}[equation]{Theorem}
\newtheorem{Dfn}[equation]{Definition}
\newtheorem{Lem}[equation]{Lemma}
\newtheorem{Prs}[equation]{Proposition}
\newtheorem{Crl}[equation]{Corollary}
\newtheorem{Rmk}[equation]{Remark}
\begin{document}

\title{{\larger[3]U}nit {\larger[3]G}roups of {\larger[3]S}ome {\larger[3]G}roup {\larger[3]R}ings}

\author{\|Ali |Ashja'|, Tehran
%      ,\|SecondAuthorFirstName |SecondAuthorSurname|, SecondAuthorTown, ...
		}

\rec {April 6, 2020}

\vspace{-28pt}

{\centering
\begin{dedicatory}
	CCordially Dedicated To The Reader \\
	May You Find Happiness \bigskip
\end{dedicatory}
\par}

\abstract 
%%%%%%
%%%7 to 12 lines optimum; references in the abstract should be given in full form, e.g. V. Nov\'ak, M. Novotn\'y: Linear extensions of orderings. Czech. Math. J. 50 (2000), 853--864.
%%%%%
		Let $RG$ be the gruop ring of the group $G$ over ring $R$ and $\u(RG)$ be its unit group.
	Finding the structure of the unit group of a finite group ring is an old topic in ring theory.
	In, G. Tang et al: Unit Groups of Group Algebras of Some Small Groups. Czech. Math. J. 64 (2014),
	149--157, the structure of the unit group of the group ring of the non abelian group $G$ with order
	21 over any finite field of characteristic 3 was established. In this paper,
	we are going to generalize their result to any non abelian group $G=T_{3m}$,
	where $T_{3m} = \langle x,y\,|\,x^m=y^3=1,\,x^y=x^t\rangle$.
\endabstract

\keywords
   Unit, Group, Ring
\endkeywords

\subjclass
%%%%%
%%%Mathematics Subject Classification 2020
%%%%%
16S34, 20C05, 16U60
\endsubjclass

\thanks
   The basis of this research has been extracted from dissertation.
\endthanks

\section{\bf{\larger[1]Introduction}}\label{sec1}

	Let $RG$ be the group ring of group $G$ over field $F$ and $\u(FG)$ be its unit group, \ie multiplicative subgroup containing all invertible elements. The study of unit group is one of the classical topics in ring theory that started in 1940 with a famous paper written by G. Higman \cite{Higman1940}. In recent years many new results have been achived; However, only few group rings have been computed. Unit groups are useful, for instance in the investigation of Lie properties of group rings (for example see \cite{Bovdi1999}) and isomorphism problem (for example see \cite{Creedon2008}).

	Up to now, the structure of unit groups of some group rings has been found. For instance, on integral group ring \cite{Jespers1993}, on permutation group ring \cite{Sharma2007FA4}, on commutative group ring \cite{Nezhmetdinov2010}, on linear group ring \cite{Maheshwari2016}, on quaternion group ring \cite{Creedon2009}, on modular group ring \cite{Raza2013} and on pauli group ring \cite{Gildea2010pauli}. In \cite{Davis2014}, the authors proved which groups can be unit groups, moreover, on properties of unit elements themselves instead of their groups structure \cite{Bakshi2017}.

	In this paper we will study the structure of unit group of group ring $\f_{3^n} T_{3m}$,
where ${T_{3m} = \langle x,y\,|\,x^m=y^3=1,\,x^y=x^t\rangle}$. Till now some cases, in characteristic 3, have been studied. For instance, in \cite{CreedonGildea2008}, the authors obtained the structure of unit group of $\f_{3^k} D_6$, in \cite{Gildea2010C3D6}, Gildea determined the structure of unit group $\f_{3^k} (C_3 \ti D_6)$ and in \cite{GildeaMonaghan2011} Gildea and Monaghan study groups of order 12 and recently in \cite{Monaghan2012}, Monaghan study groups of order 24. In this paper we generalize the result of \cite{Tang2014} on non abelian group $G$ of order $21$ to any group $G=T_{3m}$.

\section{\bf{\larger[1]Notations and Definitions}}\label{sec2}

	In this section, we bring some notations and lemma which we need for the proof of our main results. We denote the order of an element $g$ in the group $G$ by $Ord_G(g)$, the sum of all elements of subset $X$ in ring $R$ by $\wh{X}$, \ie $\sum_{r \in X}r$. Notice there is no need for $X$ to be a subring or subgroup, it defines for any arbitrary subset. In group ring $RG$, when $X$ is the subset of all different powers of $g$ (an element of group $G$), we may simply write $\wh{g}$ instead of $\wh{X}$. Also when $X$ is the right coset $\langle g \rangle h$, we may write $\wh{g}h$ for $\wh{X}$. In group, $x^y$ be the conjugate of $x$ by $y$ that is $x^y = y^{-1}xy$. Let $f:X \ra Y$ be an arbitrary function, then $Supp_X (f) = \{x\in X\, | f(x)\neq 0\}$. Also, we use the following notations: $\ann_R(a) = \{ r \in R \, | \, ra=ar=0 \}$, we denote a finite field of characteristic $p$ with order $p^n$ by $\f_{p^n}$.
If $E$ is a vector space over $F$, then $Dim_F(E)$ is the dimension of $E$ over $F$. Let $\u(R)$ be the unit group of ring $R$, \ie $\u(R) = \{u \in R \, | \, u^{-1} \in R \}$ and $J(R)$ be the jacobson radical of ring $R$. Now we state a useful definition.

\begin{Dfn}
		Let $RG$ be group ring of ring $R$ over the group $G$,
	$p$ be a prime number and $S_p$ be subset of all $p \! - \! elements$
	including identity element of $G$, \ie
	$S_p = \{ g \in G \, | \, \exists {n \in \w}; \, \ord_G(g)=p^n \, \}$.
	We define a binary map $T: G \ra R$ as follows:
	$$ T(g) = \left\{
				\begin{array}{*3c}
					1 & \ai & g \in    S_p \\
					0 & \ai & g \notin S_p \\
				\end{array}
			  \right.
	$$
		
		As we know that $T$ on $G$ is the base of $RG$, so we can linearly extend it to whole $RG$,
	of course no more remains binary. Also if see elements of $RG$ as functions from $G$ to $R$,
	that map every group element $(g)$ to its coefficient $(r_g)$, then their supports will be feasible.
	Now we can define ${\krn(T) \cq \{\a \in RG \, | \, \forall g \in G; \, \a g \in \ker_{RG}(T) \}}$
	and ${\sup(\a) \cq \mathrm{Supp}_G(\a)}$. Also ${\anh(a) \cq \ann_{RG}(a)}$ and
	${\dim(S) \cq \mathrm{Dim}_F(S)}$. \\
\end{Dfn}

\begin{Lem}
	\label{lemma}
		Let $F$ be a finite field of characteristic $p$, $G$ be a finite group,
	$T$ be a function defined as above and $s=\wh{S}_p$. Then: 
	\\ $\setlength\arraycolsep{1.5pt}
	\begin{array}{*4c}
		\quad (1) \quad & \quad J(FG)   & \subseteq & \krn(T). \\
		\quad (2) \quad & \quad \krn(T) & =         & \anh(s). \\
		\quad (3) \quad & \quad J(FG)   & \subseteq & \anh(s). \\
	\end{array}$
\end{Lem}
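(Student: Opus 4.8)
The plan is to settle (2) first, then (1), and to read off (3) as a formal consequence of the two. The single structural fact driving everything is that $S_p$ is a union of conjugacy classes (order is a conjugation invariant), so that $s=\wh{S}_p$ is \emph{central} in $FG$; this centrality is what makes the annihilator description clean and simultaneously explains, conceptually, why $s$ lands in the socle.

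For (2) I would first record the identity $T(\g)=\tau(\g s)$ for every $\g\in FG$, where $\tau(\cdot)$ denotes the coefficient of the identity $1$. This is a one-line check: writing $\g=\sum_g c_g g$, the coefficient of $1$ in $\g s=\sum_{g}\sum_{h\in S_p}c_g\,gh$ collects exactly those $g$ with $g^{-1}\in S_p$, which is the same as $g\in S_p$, giving $\sum_{g\in S_p}c_g=T(\g)$. Since by linearity $\a\in\krn(T)$ means $T(\a\b)=0$ for all $\b\in FG$, and since $s$ is central, for each $g\in G$ we have $T(\a g)=\tau(\a g s)=\tau(\a s g)$, and $\tau(\a s g)$ is precisely the coefficient of $g^{-1}$ in $\a s$. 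Hence $\a\in\krn(T)$ forces every coefficient of $\a s$ to vanish, that is $\a s=0$, and centrality upgrades this to $s\a=0$; the converse is immediate. This yields $\krn(T)=\anh(s)$.

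For (1), since $J(FG)$ is a two-sided ideal we have $\a g\in J(FG)$ whenever $\a\in J(FG)$, so it suffices to show $T$ vanishes on $J(FG)$: then $\a\in J(FG)$ gives $T(\a g)=0$ for all $g$, hence $\a\in\krn(T)$. The key step I would prove is the Frobenius-twisted trace identity
\[
\tau(\b^{p^k})=\sum_{g\,:\,g^{p^k}=1}b_g^{\,p^k}\qquad\Big(\b=\textstyle\sum_g b_g g\Big),
\]
valid for every $k\ge 1$. To get it I let $C_{p^k}$ act by rotation on the index $(p^k)$-tuples in the expansion of $\b^{p^k}$: the coefficient product is constant along an orbit and the condition ``product $=1$'' is orbit-invariant (cyclic rotations are conjugate), so any orbit of size $>1$ (necessarily divisible by $p$) contributes $0$ in characteristic $p$, leaving only the diagonal tuples $g_1=\dots=g_{p^k}=g$. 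For $k$ large enough that $p^k$ exceeds the exponent of $G$ we have $\{g:g^{p^k}=1\}=S_p$, and Frobenius semilinearity turns the right-hand side into $\big(\sum_{g\in S_p}b_g\big)^{p^k}=T(\b)^{p^k}$. Since $J(FG)$ is nilpotent ($FG$ is Artinian), for large $k$ we have $\b^{p^k}=0$, whence $T(\b)^{p^k}=\tau(0)=0$ and therefore $T(\b)=0$.

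Statement (3) is then immediate: (1) gives $J(FG)\subseteq\krn(T)$ and (2) gives $\krn(T)=\anh(s)$, so $J(FG)\subseteq\anh(s)$. The main obstacle I expect is establishing the $p$-power trace identity cleanly, namely the orbit-counting that annihilates every non-diagonal contribution together with the check that the exponent condition makes $\{g:g^{p^k}=1\}$ coincide with $S_p$; everything else (centrality of $s$, the reduction of (1) to $T|_{J(FG)}=0$, nilpotency of $J(FG)$) is routine. The only genuine idea is to detect the radical through $\tau(\b^{p^k})$ rather than through $\tau$ itself, the latter being blind to $J(FG)$ precisely when $p\mid|G|$.
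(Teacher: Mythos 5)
Your proof is correct, and there is an important point of comparison to make: the paper does not actually prove this lemma at all --- its entire ``proof'' is a citation to Tang et al.\ (2014), Lemma 2.2 --- so your argument supplies content that the paper outsources. Your route is the standard (and almost certainly the cited) one, and every step checks out. For (2): since element orders are invariant under conjugation and inversion, $S_p$ is closed under both, so $s=\wh{S}_p$ is central and the coefficient identity $T(\gamma)=\tau(\gamma s)$ holds; then $T(\alpha g)=\tau(\alpha s g)$ recovers the coefficient of $g^{-1}$ in $\alpha s$, so $\krn(T)$ is exactly the annihilator, and centrality makes the left and right annihilators coincide, which matters because the paper's $\anh$ is defined two-sidedly. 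For (1): your rotation-orbit proof of the characteristic-$p$ identity $\tau(\beta^{p^k})=\sum_{g^{p^k}=1}b_g^{p^k}$ is sound (the product-equals-one condition is invariant under cyclic shifts because cyclic rearrangements are conjugates, orbit sizes $>1$ are divisible by $p$, and only diagonal tuples survive), and combining it with Frobenius additivity, nilpotency of $J(FG)$ (valid since $FG$ is finite-dimensional, hence Artinian), and a single $k$ chosen so that $p^k$ exceeds both the exponent of $G$ and the nilpotency index, gives $T|_{J(FG)}=0$; the ideal property of $J(FG)$ then converts this into $J(FG)\subseteq\krn(T)$. Part (3) is formal, as you say. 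The only stylistic suggestion is to state explicitly at the outset that one fixed $k$ is chosen to satisfy both largeness conditions simultaneously, which your write-up does implicitly but not in so many words.
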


\begin{proof}
	\cite[Lemma 2.2 on p.~151]{Tang2014}. \\
\end{proof}

	In the next section we prepare some preliminaries that will be needed in our main theorem proof.

\section{\bf{\larger[1]Preliminary Results}}\label{sec3}

		Let $p = 3$, $G=T_{3m}$ group of order $3m$ that $t^3 \overset{m}{\equiv} 1$, $m=3k+1$
	and $(m,t-1)=1$. Also $s$ be as defined in \Cref{lemma},
	$\langle x \rangle$ be the cyclic subgroup generated by $x$ and
	$\langle x \rangle y$ be right coset of $\langle x \rangle$ with respect to $y$ that is
	$\langle x \rangle y = \{x^iy \, | \, 0 \leqslant i \leqslant m-1 \}$.
		By which defined $\wh{x}$ is sum of all different powers of $x$, so we have:
			$$\wh{x}  = \sum_{i=0}^{3k} x^i \qquad \qquad \wh{x}y = \sum_{i=0}^{3k} x^i y$$
			
		First we compute the conjugacy classes of $T_{3m}$:

\begin{Prs}
\label{conjugacy 39}
		Conjugacy classes of $T_{3m}$ are as below:
	\begin{equation*}
	\begin{array}{*4l}
		\c_0 & = & \{ 1							  \} &											  \\
		\c_i & = & \{ x^{j_i},x^{j_it},x^{j_it^2} \} & \text{k Conjugacy Classes with 3 Elements} \\
		\c_+ & = & \, \langle x \rangle y			 &											  \\
		\c_- & = & \, \langle x \rangle y^{-1}		 &											  \\
	\end{array}
	\end{equation*}
\end{Prs}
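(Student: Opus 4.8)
The plan is to exploit the semidirect-product structure $T_{3m} = \langle x\rangle \rtimes \langle y\rangle$, with $\langle x\rangle \cong C_m$ normal and $\langle y\rangle \cong C_3$. Since $\langle x\rangle$ is normal and the quotient $T_{3m}/\langle x\rangle \cong C_3$ is abelian, conjugation fixes every coset of $\langle x\rangle$ setwise; hence each of the three cosets $\langle x\rangle$, $\langle x\rangle y$, $\langle x\rangle y^{-1}$ is a union of conjugacy classes, and I would treat them one at a time. Throughout I would use the two relations $y^{-1}x^a y = x^{at}$ and, equivalently, $y x^a = x^{at^2} y$, the latter because $t^{-1}\equiv t^2 \pmod m$ (as $t^3\equiv 1$).

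First, the coset $\langle x\rangle$. Powers of $x$ commute, so conjugating $x^a$ by $x^b$ is trivial, while conjugating by $y^j$ gives $(x^a)^{y^j} = x^{at^j}$. Thus the class of $x^a$ is $\{x^a, x^{at}, x^{at^2}\}$, an orbit of multiplication by $t$ on $\z/m\z$ whose size divides $3$. For $a\neq 0$ the size is exactly $3$: a fixed point would force $a(t-1)\equiv 0 \pmod m$, and $(m,t-1)=1$ then gives $a\equiv 0$, a contradiction. This produces $\c_0=\{1\}$ together with $k$ three-element classes $\c_i$, since the $m-1=3k$ nonidentity powers of $x$ split into orbits of size $3$.

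Next, the coset $\langle x\rangle y$. Here I would compute
\[
(x^a y)^{x^b} = x^{-b} x^a\, y\, x^b = x^{a-b}\,(y x^b) = x^{a-b}\, x^{bt^2} y = x^{\,a + b(t^2-1)}\,y .
\]
As $b$ runs over $\z/m\z$ this reaches every element of $\langle x\rangle y$ provided $(t^2-1,m)=1$, and verifying this coprimality is the main obstacle. From $t^3\equiv 1\pmod m$ and $(t-1,m)=1$ one gets $m \mid t^2+t+1$; if a prime $q\mid m$ divided $t+1$, then $t\equiv -1 \pmod q$ would give $t^2+t+1\equiv 1\pmod q$, contradicting $q\mid t^2+t+1$. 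Hence $(t+1,m)=1$, which together with $(t-1,m)=1$ yields $(t^2-1,m)=1$. Therefore $\c_+=\langle x\rangle y$ is a single class of $m$ elements. The analogous computation $(x^a y^{-1})^{x^b}=x^{\,a+b(t-1)}\,y^{-1}$ sweeps out the whole coset directly from $(t-1,m)=1$, so $\c_-=\langle x\rangle y^{-1}$ is likewise a single class.

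Finally I would tally the classes as a consistency check: $1 + 3k + m + m = 1+(m-1)+2m = 3m = |T_{3m}|$, confirming that nothing has been missed. Everything but the coprimality $(t^2-1,m)=1$ is a direct application of the defining relations and a short orbit count; that one identity, derived from $m\mid t^2+t+1$, is the only genuinely delicate point.
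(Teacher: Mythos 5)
Your proof is correct and follows essentially the same route as the paper's: partition $T_{3m}$ into the three cosets of $\langle x\rangle$ (the paper phrases this as ``three types of elements''), sweep out each of $\langle x\rangle y$ and $\langle x\rangle y^{-1}$ by repeated conjugation with $x$ using the coprimality of $t^2-1$ and $t-1$ with $m$, and cut $\langle x\rangle$ itself into orbits of multiplication by $t$. You additionally tighten two points the paper asserts without detail, namely the derivation of $(t^2-1,m)=1$ from $m \mid t^2+t+1$ and the check that each nonidentity orbit in $\langle x\rangle$ has size exactly $3$, but the underlying argument is the same.
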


\begin{proof}
		It is clear that $T_{3m}$ has three types of elements:
	$x^i y^{-1}$, $x^i y$ and $x^i$. Conjugating these types with each other has 9 possibilities.
	By checking all of them we find that each elements can only be conjugate
	of some elements of same type. Since $x^y = x^t$ itself shows $y^{-1}x = x^ty^{-1}$,
	and with $t^3 \overset{m}{\equiv} 1$ also shows $yx = x^{t^2}y$, so we have:
	\begin{equation*}
	\begin{alignedat}{9}
		& (x^i y^{-1} && )^x = x^{-1}(x^i y^{-1} && )x = x^{-1}x^i(y^{-1} && x) = x^{-1}x^i(x^t
					  && y^{-1} && ) = (x^{-1}x^ix^t	 && )y^{-1} && = x^{i+(t^{\cw{1}}-1)} && y^{-1}\\
		& (x^i y	  && )^x = x^{-1}(x^i y		 && )x = x^{-1}x^i(y	  && x) = x^{-1}x^i(x^{t^2}
					  && y		&& ) = (x^{-1}x^ix^{t^2} && )y		&& = x^{i+(t^2-1)}		  && y	   \\
	\end{alignedat}
	\end{equation*}
	
		We know $(m,t-1)=1$, and with $t^3 \overset{m}{\equiv} 1$
	also we can conclude that $(m,t^2-1)=1$, so repeated conjugating with $x$
	implies that all elements of type $x^iy^{-1}$ form a conjagacy class,
	similarly all elements of type $x^iy$ form another one.	For elements of form $x^i$, notice that:
		$$(x^i)^{(x^jy^{\pm 1})} = (x^i)^{(y^{\pm 1})}$$ \\
		
		So it suffices to conjugate only by $y^{\pm 1}$.
	Now $x^y = x^t$ and $x^{y^{-1}} = x^{t^2}$ imply that they are as was claimed, Therefore:
	\begin{equation*}
	\begin{array}{*4l}
		\c_0 & = & \{ 1							  \} &											  \\
		\c_i & = & \{ x^{j_i},x^{j_it},x^{j_it^2} \} & \text{k Conjugacy Classes with 3 Elements} \\
		\c_+ & = & \, \langle x \rangle y			 &											  \\
		\c_- & = & \, \langle x \rangle y^{-1}		 &											  \\
	\end{array}
	\end{equation*}
\end{proof}
		Now we find the structure of annihilator:
\begin{Prs}
\label{g hat 39}
		Let $p = 3$, $G = T_{3m}$ and $s$ be as was defined in \Cref{lemma}.
	Then the structure of annihilator of $s$ will be as follows:
			$$\anh(s) = \{ a^-\wh{x}y^{-1} + a\wh{x} + a^+\wh{x}y \ | \ a^-+a+a^+=0 \}$$
\end{Prs}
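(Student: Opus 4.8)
The plan is to avoid solving the annihilator equations coefficient-by-coefficient and instead read off the structure of $s$ from an idempotent decomposition. First I would pin down the set $S_3$ of $3$-elements of $G$. Since $m=3k+1$ is prime to $3$, no nontrivial power of $x$ has order a power of $3$, so $1$ is the only $3$-element inside $\langle x\rangle$. For the remaining two cosets I would compute, using $yx=x^{t^2}y$, that $(x^iy)^3 = x^{i(1+t+t^2)}$; because $t^3\equiv 1 \pmod m$ and $(m,t-1)=1$ force $m\mid 1+t+t^2$, this equals $1$, and the same holds for $x^iy^{-1}$. Hence $S_3=\{1\}\cup\langle x\rangle y\cup\langle x\rangle y^{-1}$ and
$$ s = 1 + \wh{x}y + \wh{x}y^{-1}. $$

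Next I would isolate $e\cq\wh{x}$. Since $m\equiv 1\pmod 3$ we get $\wh{x}^2=m\wh{x}=\wh{x}$, and since $t$ is a unit modulo $m$ the reindexing $i\mapsto it^2$ permutes the residues modulo $m$, giving $y\wh{x}=\wh{x}y$; thus $e$ is a central idempotent of $FG$. Using $1+y+y^2=(y-1)^2$ in characteristic $3$, I would rewrite
$$ s = (1-e) + e(1+y+y^2), $$
so that under the ring decomposition $FG = (1-e)FG \op eFG$ the element $s$ is the identity $(1-e)$ on the first summand and $e(1+y+y^2)$ on the second. The summand $eFG$ is spanned by the linearly independent $e,ey,ey^2$ and is a copy of $FC_3\cong F[Y]/(Y-1)^3$, with $e(1+y+y^2)$ corresponding to $(Y-1)^2$.

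With this in place the annihilator splits along the decomposition. On $(1-e)FG$ the element $s$ restricts to the identity, so its annihilator there is $0$; any $\a$ killing $s$ must therefore lie entirely in $eFG$. On $eFG\cong F[Y]/(Y-1)^3$ I would compute the annihilator of $(Y-1)^2$: setting $u=Y-1$ with $u^3=0$, an element $a+bu+cu^2$ kills $u^2$ exactly when $a=0$, so the annihilator is the ideal $(u)=(Y-1)$, which in the basis $1,y,y^2$ is precisely the codimension-one space $\{c_0+c_1y+c_2y^2 \mid c_0+c_1+c_2=0\}$. Multiplying back by $e=\wh{x}$ and relabelling the coefficients as $a,a^+,a^-$ yields
$$ \anh(s) = \{\, a^-\wh{x}y^{-1} + a\wh{x} + a^+\wh{x}y \ \mid\ a^-+a+a^+=0 \,\}. $$
Since $eFG$ is commutative and $s$ acts as the identity on $(1-e)FG$, the left and right annihilators coincide, so this is the full two-sided $\anh(s)$.

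The main obstacle I expect is bookkeeping rather than anything conceptual: verifying that $\wh{x}$ is genuinely central (the step $y\wh{x}=\wh{x}y$, which quietly uses that $t$ is invertible modulo $m$) and translating the abstract ideal $(Y-1)$ back into the coefficient condition $a^-+a+a^+=0$. One should also be careful that $S_3$ contains only the single identity from $\langle x\rangle$ together with the two full cosets, since it is tempting to absorb $1$ into $\wh{x}$; in fact $s=1+\wh{x}(y+y^{-1})$, and it is only after passing to $eFG$ that the identity and coset contributions recombine into $e(1+y+y^2)$.
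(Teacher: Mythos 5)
Your proof is correct, and it takes a genuinely different route from the paper's. The paper proceeds by direct computation: it writes $\a \in \anh(s)$ as a sum of pieces supported on the conjugacy classes from \Cref{conjugacy 39}, expands $\a \cdot s = 0$ using the multiplication rules $x^jy\cdot\wh{x}y = \wh{x}y^{-1}$ and their companions, and matches coefficients group element by group element to conclude that the coefficients are constant on each of $\langle x\rangle$, $\langle x\rangle y$, $\langle x\rangle y^{-1}$ and sum to zero. You instead observe that $e=\wh{x}$ is a central idempotent (your two auxiliary checks, $m\equiv 1 \pmod 3$ for idempotency and invertibility of $t$ modulo $m$ for centrality, are exactly what is needed), split $FG=(1-e)FG \op eFG$, note that $s$ restricts to the identity of $(1-e)FG$ and to $(Y-1)^2$ under $eFG\cong FC_3\cong F[Y]/(Y-1)^3$, and read off the annihilator as the ideal generated by $Y-1$; all of these steps check out, including the determination of $S_3$ via $(x^iy^{\pm 1})^3=x^{i(1+t+t^2)}=1$. (For the two-sided issue you could also simply remark that $s$ is a sum of class sums, hence central in $FG$, so left, right and two-sided annihilators agree automatically.) Both arguments are complete; yours trades the paper's coefficient bookkeeping for structure theory, and it buys more than the proposition itself. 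Your summand $(1-e)FG$ is precisely $\d(G,H)$ and $eFG\cong F(G/H)$, so your splitting is exactly the decomposition $FG\cong F(G/H)\oplus\d(G,H)$ that the paper only invokes at the very end (in \Cref{main 39}, via \cite[Proposition 3.6.7]{Milies2002}); moreover, since your $\anh(s)$ is the ideal $(Y-1)$ with $(Y-1)^3=0$ and $F$-dimension $2$, you obtain \Cref{nil 39} and \Cref{dim jacobson 39} essentially for free. The paper's computation, by contrast, requires nothing beyond multiplying elements of the group ring, so it is more elementary and self-contained, but it yields only the single proposition.
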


\begin{proof}
		Since by \Cref{conjugacy 39} conjugacy classes of $G$ are known,
	It's easy to see that $G$ has three kinds of elements: Identity,
	elements of form $x^iy^{\pm 1}$ with order $3$ (they are conjugates of $y^{\pm 1}$) and
	elements of form $x^i \neq 1$ with order prime to 3 ($m=3k+1$).
	So $S_3 = \c_- \cup \c_0 \cup \c_+$,
	therefore, $\wh{S}_3 = \wh{\c}_- + \wh{\c}_0 + \wh{\c}_+ = \wh{x} y^{-1} + 1 + \wh{x} y$,
	sum of $3 \! - \! elements$	including identity.
	Let ${\a = \sum_i \a_i \in \anh(s)}$
	where, ${\sup(\a_i) \subseteq \c_i}$ and $s=\wh{S}_3$. Then we have:
	\begin{equation}
	\label{alpha s}
	\begin{split}	
		0 = \a . s
		& = (\sum_i \a_i)(\wh{x}y^{-1}+1+\wh{x}y) =					\\
		& = (\a_-+(\sum_{i=0}^k \a_i)+\a_+)(\wh{x}y^{-1}+1+\wh{x}y) \\
		& = (\a_- + \a_+\wh{x}y + (\sum_{i=0}^k \a_i)\wh{x}y^{-1} ) \\
		& + ((\sum_{i=0}^k \a_i) + \a_-\wh{x}y + \a_+\wh{x}y^{-1} )	\\
		& + (\a_+ + \a_-\wh{x}y^{-1} + (\sum_{i=0}^k \a_i)\wh{x}y )	\\
	\end{split}
	\end{equation}
	\par \medskip
	
		Notice that although $x$ doesn't commute with $y$, but $\wh{x}$ does, so for every $j$:
	\begin{equation}
	\label{x hat y 39}
	\begin{alignedat}{5}
		& x^j y		 && . \wh{x} y		&& = x^j   && . \wh{x} y^{-1} && = \wh{x} y^{-1} \\
		& x^j y^{-1} && . \wh{x} y		&& = x^j y && . \wh{x} y^{-1} && = \wh{x}		 \\
		& x^j y^{-1} && . \wh{x} y^{-1} && = x^j   && . \wh{x} y	  && = \wh{x} y		 \\
	\end{alignedat}
	\end{equation}

		So the conjugacy classes of three last parentheses of \Cref{alpha s} are different and
	since left side is zero, therefore, every parentheses should be zero separately. Hence:
	\begin{equation*}
	\begin{array}{{>{\dps}c}}
		\a_- + \a_+\wh{x}y + (\sum_{i=0}^k \a_i)\wh{x}y^{-1} = 0 \\
		(\sum_{i=0}^k \a_i) + \a_-\wh{x}y + \a_+\wh{x}y^{-1} = 0 \\
		\a_+ + \a_-\wh{x}y^{-1} + (\sum_{i=0}^k \a_i)\wh{x}y = 0 \\
	\end{array}
	\end{equation*}
	\par
	
		Now again by using \Cref{x hat y 39} we can conclude that:
	\begin{equation}
	\label{zero 39}
	\begin{array}{*3{>{\dps}l}}
		\a_-				+ \e(\a_+ + \sum_{i=0}^k \a_i) \wh{x}y^{-1} & = & 0 \\
		(\sum_{i=0}^k \a_i) + \e(\a_- +				 \a_+) \wh{x}		& = & 0 \\
		\a_+				+ \e(\a_- + \sum_{i=0}^k \a_i) \wh{x}y		& = & 0 \\
	\end{array}
	\end{equation}
	
		As mentioned above $\a = \sum_i \a_i$ where $\sup(\a_i) \subseteq \c_i$
	and by definition of $\c_i$'s from \Cref{conjugacy 39}, we can write:
	\begin{equation*}
	\begin{array}{*3{>{\dps}l}}
		\sum_{i=0}^k \a_i & = &	\sum_{i=0}^{3k} a_i	  x^i		 \\
		\a_+			  & = & \sum_{i=0}^{3k} a^+_i x^i y		 \\
		\a_-			  & = & \sum_{i=0}^{3k} a^-_i x^i y^{-1} \\
	\end{array}
	\end{equation*}
	\par
	
		By substitute of each $\a_i$'s in \Cref{zero 39},
	we can calculate the coefficients of each element of the group in the left sides of equations
	and since right sides are zero, so each coefficient must be zero too.
	Therefore, for every $h$, $i$ and $j$ we have:
	\begin{equation*}
	\begin{array}{*5{>{\dps}c}}
		a^-_h + \e(\a_+ + \sum_{r=0}^k \a_r) = 0	   & \qquad		&
		a_i	  + \e(\a_- + \a_+)				 = 0	   & \qquad		&
		a^+_j + \e(\a_- + \sum_{r=0}^k \a_r) = 0								\\
		
		a^-_h = - \sum_{r=0}^{3k} (a^+_r + a_r	)	   & \qquad		&
		a_i	  = - \sum_{r=0}^{3k} (a^-_r + a^+_r)	   & \qquad		&
		a^+_j = - \sum_{r=0}^{3k} (a^-_r + a_r  )								\\
		
		a^-_0 = \dt = a^-_{3k}						   & \stack{\cw{|}}{r}{k} &
		a_0	  = \dt = a_{3k}						   & \stack{\cw{|}}{r}{k} &
		a^+_0 = \dt = a^+_{3k}													\\
	\end{array}
	\end{equation*}
	\par
	
		So by knowing $a_0^-$, $a_0$ and $a_0^+$, all coefficients can be computed.
	Also since $F$ is a field of characteristic 3, we have $3k+1=1$,
	therefore, we have $a_0^- + a_0 + a_0^+ = 0$, thus:
		$$\anh(s) = \{ a^-_0 \wh{x}y^{-1} + a_0 \wh{x} + a^+_0 \wh{x}y \ | \ a^-_0+a_0+a^+_0=0 \}$$
\end{proof}

		Let $s$ be as was in \Cref{g hat 39}, that is $s = \wh{S}_3$, then we have:

\begin{Prs}
	\label{nil 39}
		$\anh(s)$ is a nilpotent ideal.
\end{Prs}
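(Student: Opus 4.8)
The plan is to prove the two assertions of the statement separately: that $\anh(s)$ is a two-sided ideal, and that it is nilpotent.

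For the ideal property, the first thing I would record is that $s$ is \emph{central} in $FG$. By \Cref{conjugacy 39} we have $S_3 = \c_- \cup \c_0 \cup \c_+$, a union of full conjugacy classes, so $s = \wh{S}_3 = \wh{\c}_- + \wh{\c}_0 + \wh{\c}_+$ is a sum of class sums, and every class sum lies in the centre of $FG$. Once $s$ is central, $\anh(s) = \ann_{FG}(s) = \{\a \in FG \mid \a s = 0\}$, and this is at once a two-sided ideal: from $\a s = 0$ one gets $(\b\a)s = \b(\a s) = 0$ and $(\a\b)s = \a(s\b) = (\a s)\b = 0$ for every $\b \in FG$.

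For nilpotency I would exploit the explicit description supplied by \Cref{g hat 39}. The key computational step is the multiplication rule
$$\wh{x}\,y^i \cdot \wh{x}\,y^j = \wh{x}\,y^{\,i+j} \qquad (i+j \text{ taken mod } 3),$$
which holds because $\wh{x}$ commutes with $y$ (as noted just before \Cref{x hat y 39}) and because $\wh{x}^{\,2} = m\wh{x} = \wh{x}$ in characteristic $3$, using $m = 3k+1 \equiv 1$. Consequently the $F$-span of $\{\wh{x},\, \wh{x}y,\, \wh{x}y^{-1}\}$ is a subalgebra of $FG$, and the $F$-linear map sending $a^-\wh{x}y^{-1} + a\wh{x} + a^+\wh{x}y$ to $a^- y^{-1} + a + a^+ y$ is an isomorphism of this subalgebra onto $F[y]/(y^3-1)$, with $\wh{x}$ corresponding to the identity; the multiplication rule is exactly what guarantees that this map preserves products.

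Under this identification, the defining constraint $a^- + a + a^+ = 0$ of \Cref{g hat 39} is precisely the condition that the image polynomial vanish at $y = 1$, i.e.\ that it lie in the augmentation ideal of $F[y]/(y^3-1)$. Since $\mathrm{char}\,F = 3$ forces $y^3 - 1 = (y-1)^3$, that augmentation ideal is $(y-1)$ and satisfies $(y-1)^3 = 0$. Transporting this back through the isomorphism yields $(\anh(s))^3 = 0$, so $\anh(s)$ is nilpotent, which together with the ideal property completes the proof. The only genuine work is verifying the multiplication rule and that the correspondence respects products; I expect no real obstacle beyond this bookkeeping, after which nilpotency is immediate from $(y-1)^3 = 0$.
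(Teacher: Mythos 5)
Your proof is correct, and it reaches the conclusion by a genuinely different route than the paper, although both rest on the same two computational facts: that $\wh{x}$ commutes with $y$, and that $\wh{x}^{\,2}=m\wh{x}=\wh{x}$ in characteristic $3$ since $m=3k+1$. The paper argues by direct expansion: it takes three arbitrary elements $\a,\b,\g$ of $\anh(s)$ in the form supplied by \Cref{g hat 39}, multiplies them out, and finds
$\a\b\g=(a^+-a^-)(b^+-b^-)(c^-+c+c^+)\,\wh{G}\,|\langle x\rangle|$,
which vanishes because $c^-+c+c^+=0$; hence $\anh^3(s)=0$. You instead package the same multiplication rule structurally: the span of $\{\wh{x},\wh{x}y,\wh{x}y^{-1}\}$ is a subalgebra with identity $\wh{x}$, isomorphic to $F[y]/(y^3-1)$ (the group algebra of a cyclic group of order $3$), under which $\anh(s)$ is carried onto the augmentation ideal, and nilpotency of index $3$ falls out of $y^3-1=(y-1)^3$ in characteristic $3$. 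Your version explains \emph{why} the nilpotency index is exactly $3$ (the annihilator is the radical of a copy of $FC_3$), and it also proves explicitly that $\anh(s)$ is a two-sided ideal, via centrality of $s$ --- a fact the paper relies on later (e.g.\ in \Cref{anh inclusion 39}, where nil ideals are compared with $J(FG)$) but never actually verifies. What the paper's computation buys in exchange is self-containedness and an explicit product formula for pairs and triples of elements of $\anh(s)$, with no need to check that your identification with $F[y]/(y^3-1)$ respects products (which, in your write-up, is the one piece of bookkeeping that must still be done by the same hand calculation the paper performs).
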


\begin{proof}
		Let $\a , \b , \g \in \anh(s)$, according to \Cref{g hat 39}:
	\begin{equation}
	\begin{alignedat}{5}
		& \a && = a^- && \wh{x}y^{-1} + a && \wh{x} + a^+ && \wh{x}y \\
		& \b && = b^- && \wh{x}y^{-1} + b && \wh{x} + b^+ && \wh{x}y \\
		& \g && = c^- && \wh{x}y^{-1} + c && \wh{x} + c^+ && \wh{x}y \\
	\end{alignedat}
	\end{equation}
	\par
	
		So we have:
	\begin{equation}
	\begin{split}
		\a . \b . \g & = (a^- \wh{x}y^{-1} + a \wh{x} + a^+ \wh{x}y)
					   . (b^- \wh{x}y^{-1} + b \wh{x} + b^+ \wh{x}y)
			   		   . (c^- \wh{x}y^{-1} + c \wh{x} + c^+ \wh{x}y) \\
					 & = (a^+-a^-)(b^+-b^-)\wh{G}.(c^- \wh{x}y^{-1} + c \wh{x} + c^+ \wh{x}y) \\
					 & = (a^+-a^-)(b^+-b^-)(c^-+c+c^+)\wh{G}|\langle x \rangle|
	\end{split}
	\end{equation}
	\par
	
		By \Cref{g hat 39}, $\a . \b . \g = 0$, thus $\anh^3(s) = 0$,
	therefore, $\anh(s)$ is a nilpotent ideal. \\
\end{proof}

		Let $s$ be as was in \Cref{nil 39}, that is $s = \wh{S}_3$, then we have:

\begin{Prs}
	\label{anh inclusion 39}
	$\anh(s) \subseteq J(FG)$.
\end{Prs}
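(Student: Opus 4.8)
The plan is to deduce this inclusion almost immediately from \Cref{nil 39} together with the standard characterization of the Jacobson radical. Recall the elementary fact, valid in \emph{any} ring $R$ (no Artinian hypothesis needed), that every nil one-sided ideal is contained in $J(R)$. Since \Cref{nil 39} already shows $\anh(s)$ is a nilpotent ideal, it is in particular a nil ideal, and the inclusion $\anh(s) \subseteq J(FG)$ will follow formally. The heavy computational work — establishing $\anh^3(s)=0$ — has been carried out in the preceding proposition, so what remains here is essentially bookkeeping.

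Concretely, I would argue as follows. First note that $\anh(s)$ is a two-sided ideal: because $s = \wh{S}_3 = \wh{x}y^{-1} + 1 + \wh{x}y$ is a sum of the class sums $\wh{\c}_-,\wh{\c}_0,\wh{\c}_+$ from \Cref{conjugacy 39}, it lies in the center of $FG$, and the annihilator of a central element is always a two-sided ideal. Now take any $\a \in \anh(s)$. For an arbitrary $\b \in FG$ the product $\b\a$ again lies in $\anh(s)$, hence is nilpotent by \Cref{nil 39} (indeed $(\b\a)^3 = 0$ after absorbing scalars, or more crudely $\anh(s)$ being nilpotent forces every element to be nilpotent). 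Consequently
\begin{equation*}
	(1 - \b\a)\bigl(1 + \b\a + (\b\a)^2\bigr) = 1 - (\b\a)^3 = 1,
\end{equation*}
so $1 - \b\a$ is a unit of $FG$ for every $\b$. By the characterization
\begin{equation*}
	J(FG) = \{\, \a \in FG \ | \ 1 - \b\a \in \u(FG) \text{ for all } \b \in FG \,\},
\end{equation*}
we conclude $\a \in J(FG)$, and therefore $\anh(s) \subseteq J(FG)$.

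I do not expect a genuine obstacle in this proof; the one point to state carefully is which characterization of $J(FG)$ is being invoked, and the verification that $1-\b\a$ is invertible via the truncated geometric series. Combined with part $(3)$ of \Cref{lemma}, namely $J(FG) \subseteq \anh(s)$, this yields the equality $J(FG) = \anh(s)$, which is the form the radical will be used in when passing to the semisimple quotient $FG/J(FG)$ in the main theorem.
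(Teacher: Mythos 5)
Your proof is correct and follows essentially the same route as the paper: both deduce the inclusion from \Cref{nil 39} together with the standard fact that a nil (in particular nilpotent) ideal is contained in the Jacobson radical. The only difference is that the paper cites this fact from the literature (Milies--Sehgal, Lemma 2.7.13), whereas you prove it inline via the quasi-regularity characterization of $J(FG)$ and the identity $(1-\b\a)\bigl(1+\b\a+(\b\a)^2\bigr)=1$, which is a self-contained but mathematically identical step.
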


\begin{proof}
		Since every nilpotent ideal is a nil ideal, so \Cref{nil 39} shows $\anh(s)$ is a nil ideal.
	On the other hand, by \cite[Lemma 2.7.13 on p.~109]{Milies2002},
	Jacobson radical contains all of the nil ideals, so:
		$$\anh(s) \subseteq J(FG)$$
\end{proof}

In the next corollary, we'll show that the equality hold:

\begin{Crl}
	\label{anh equal 39}
		$J(FG) = \anh(s)$.
\end{Crl}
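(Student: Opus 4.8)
The plan is to prove this equality of ideals by the standard double-inclusion argument, and in fact both inclusions have already been secured in the preceding results, so the corollary is essentially a matter of assembling them.

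First I would record the inclusion $J(FG) \subseteq \anh(s)$. This is precisely part $(3)$ of \Cref{lemma}; equivalently, it follows by chaining part $(1)$, namely $J(FG) \subseteq \krn(T)$, with the identity $\krn(T) = \anh(s)$ from part $(2)$. Either route gives the containment of the Jacobson radical inside the annihilator of $s = \wh{S}_3$ for free, since \Cref{lemma} is a general fact about finite group rings over a field of characteristic $p$ and applies verbatim to $FG = \f_{3^n} T_{3m}$.

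Next I would invoke the reverse inclusion $\anh(s) \subseteq J(FG)$, which is exactly the statement of \Cref{anh inclusion 39}. That proposition rested on \Cref{nil 39}, where the explicit description of $\anh(s)$ from \Cref{g hat 39} was used to show $\anh^3(s) = 0$, so $\anh(s)$ is a nilpotent (hence nil) ideal and therefore sits inside the Jacobson radical. Combining the two inclusions yields $J(FG) = \anh(s)$, completing the argument.

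I do not expect any genuine obstacle here: all the real work has already been done, the nontrivial direction being the nilpotency computation underlying \Cref{anh inclusion 39}. The only point to state cleanly is that \Cref{lemma} supplies the general containment while the characteristic‑$3$, order‑$3m$ structural computations supply the reverse, so that the two meet exactly at equality.

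\begin{proof}
	By \Cref{lemma}$(3)$ we have $J(FG) \subseteq \anh(s)$, and by \Cref{anh inclusion 39} we have $\anh(s) \subseteq J(FG)$. Therefore $J(FG) = \anh(s)$.
\end{proof}
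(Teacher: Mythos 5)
Your proof is correct and matches the paper's own argument exactly: both cite \Cref{lemma}(3) for $J(FG) \subseteq \anh(s)$ and \Cref{anh inclusion 39} for the reverse inclusion, then conclude equality. Nothing is missing.
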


\begin{proof}
		By \Cref{anh inclusion 39}, $\anh(s) \subseteq J(FG)$ and we know
	from \Cref{lemma} part (3) that	${J(FG) \subseteq \anh(s)}$, so the equality is hold:
		$$J(FG) = \anh(s)$$
\end{proof}

		We'll need the following proposition in the next steps:

\begin{Prs}
	\label{dim jacobson 39}
		$\dim(J(FG)) = \dim(\anh(s)) = 2$
\end{Prs}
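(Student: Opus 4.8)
The plan is to reduce everything to the explicit description of $\anh(s)$ we already have. By \Cref{anh equal 39} we know $J(FG) = \anh(s)$, so $\dim(J(FG)) = \dim(\anh(s))$ and it suffices to compute the latter. By \Cref{g hat 39},
$$\anh(s) = \{ a^-\wh{x}y^{-1} + a\wh{x} + a^+\wh{x}y \ | \ a^-+a+a^+=0 \},$$
which exhibits $\anh(s)$ as the subspace of the $F$-span of the three elements $\wh{x}y^{-1}$, $\wh{x}$ and $\wh{x}y$ cut out by the single homogeneous linear equation $a^-+a+a^+=0$.

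First I would verify that $\wh{x}y^{-1}$, $\wh{x}$ and $\wh{x}y$ are linearly independent over $F$. This is immediate from their supports, since $\sup(\wh{x}y^{-1}) = \langle x \rangle y^{-1}$, $\sup(\wh{x}) = \langle x \rangle$ and $\sup(\wh{x}y) = \langle x \rangle y$ are three pairwise disjoint cosets of $\langle x \rangle$ in $G$. Hence an $F$-combination of them vanishes only when all three coefficients are zero, so the assignment $(a^-,a,a^+) \mapsto a^-\wh{x}y^{-1} + a\wh{x} + a^+\wh{x}y$ is an $F$-linear isomorphism of $F^3$ onto that span.

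It then remains a routine dimension count: under this isomorphism $\anh(s)$ corresponds exactly to the hyperplane $\{(a^-,a,a^+) \in F^3 \mid a^-+a+a^+=0\}$, which has dimension $3-1=2$. Concretely, I expect $\{\wh{x}y^{-1}-\wh{x},\ \wh{x}-\wh{x}y\}$ to serve as an $F$-basis of $\anh(s)$: both elements lie in $\anh(s)$ (their coefficient triples $(1,-1,0)$ and $(0,1,-1)$ satisfy the constraint), they are independent by inspection of the disjoint supports, and any admissible triple is recovered as $a^-(\wh{x}y^{-1}-\wh{x}) - a^+(\wh{x}-\wh{x}y)$ once $a = -a^- - a^+$ is imposed. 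There is no serious obstacle here; the only point that genuinely requires justification is the linear independence of the three generators, and that is settled entirely by the disjointness of their supports noted above, the remainder being a one-line count of the dimension of a hyperplane in $F^3$.
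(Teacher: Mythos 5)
Your proposal is correct and follows essentially the same route as the paper: identify $J(FG)$ with $\anh(s)$ via \Cref{anh equal 39}, invoke the explicit description from \Cref{g hat 39}, and count three generators subject to one linear constraint to get $3-1=2$. The only difference is that you explicitly justify the linear independence of $\wh{x}y^{-1}$, $\wh{x}$, $\wh{x}y$ by the disjointness of their supports and exhibit a concrete basis, details the paper leaves implicit.
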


\begin{proof}
		By \Cref{g hat 39} and \Cref{anh equal 39} we have:
	\begin{equation}
	\label{gen jacobson 39}
		J(FG) = \anh(s) = \{ a^-_0 \wh{x}y^{-1} + a_0 \wh{x} + a^+_0 \wh{x}y \ | \ a^-_0+a_0+a^+_0=0 \}
	\end{equation}

		That means, $J(FG)$ and $\anh(s)$ are generated by three elements with one restriction, hence:
			$$\dim(J(FG)) = \dim(\anh(s)) = 3 - 1 = 2$$
\end{proof}

		Let $H \cq \langle x \rangle \unlhd \ G$, a normal subgroup of $G$.
	Also we recall augmentation ideals $\d(G,H) = \langle h-1 | \ h \in H \rangle$,
	that in special case $H=G$,	we denote $\d(G) = \d(G,G)$.
	Now it's obvious that, by using \cite[Proposition 3.3.3 on p.~135]{Milies2002}, we have:
	\begin{equation*}
	\begin{split}
		\dim (\d(G,H)) = |G| - [G:H] \\
		\dim (\d(G,G)) = |G| - [G:G]\\
	\end{split}
	\end{equation*}

		Therefore, we can bring the following remark:

\begin{Rmk}
	\label{dim delta 39}
		Dimensions of $\d(G,H)$ and $\d(G)$ can be computed as follows:
	\begin{equation*}
	\begin{split}
		\dim (\d(G,H)) = 3m-3 \\
		\dim (\d(G,G)) = 3m-1 \\
	\end{split}
	\end{equation*}
\end{Rmk}

		We want to represent a decomposition for $\d(G)$ over $J(FG)$ and $\d(G,H)$.
	As both of them are included in $\d(G)$, first we show they are disjoint:

\begin{Prs}
	\label{intersection 39}
	$J(FG) \cap \d(G,H) = 0$.
\end{Prs}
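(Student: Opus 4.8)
The plan is to identify $\d(G,H)$ as the kernel of the natural ring epimorphism $\pi \colon FG \ra F[G/H]$ induced by the quotient map $G \ra G/H$, and then to verify that $\pi$ is injective on $J(FG)$; this immediately forces $J(FG) \cap \ker\pi = J(FG) \cap \d(G,H)$ to be trivial.

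First I would recall the standard fact (see \cite{Milies2002}) that the relative augmentation ideal $\d(G,H)$ coincides with $\ker\pi$. Since $H = \langle x \rangle$ is normal of index $3$ in $G$, the quotient $G/H$ is cyclic of order $3$, generated by the image $\bar{y}$ of $y$; hence $F[G/H]$ has $F$-basis $\{\bar{1}, \bar{y}, \bar{y}^{-1}\}$, and $\pi(x^i) = \bar{1}$, $\pi(x^i y) = \bar{y}$, $\pi(x^i y^{-1}) = \bar{y}^{-1}$ for every $i$. Note the three cosets $\langle x \rangle$, $\langle x \rangle y$, $\langle x \rangle y^{-1}$ are exactly the blocks appearing in \Cref{conjugacy 39}.

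Next I would take an arbitrary $\b \in J(FG)$ and invoke \Cref{anh equal 39} together with \Cref{g hat 39} to write $\b = a^-_0 \wh{x}y^{-1} + a_0 \wh{x} + a^+_0 \wh{x}y$. Applying $\pi$ term by term and using $\wh{x} = \sum_{i=0}^{m-1} x^i$ with $m = 3k+1 \equiv 1 \pmod 3$, each block collapses to a single basis vector, since in characteristic $3$ one has $m \cdot c = c$. Thus
$$\pi(\b) = m\,a_0\,\bar{1} + m\,a^+_0\,\bar{y} + m\,a^-_0\,\bar{y}^{-1} = a_0\,\bar{1} + a^+_0\,\bar{y} + a^-_0\,\bar{y}^{-1}.$$

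Finally, if $\b \in J(FG) \cap \d(G,H)$, then $\pi(\b) = 0$, and the linear independence of the distinct group elements $\bar{1}, \bar{y}, \bar{y}^{-1}$ of $G/H$ forces $a_0 = a^+_0 = a^-_0 = 0$, whence $\b = 0$; this gives $J(FG) \cap \d(G,H) = 0$. I expect the only genuinely delicate point to be the coefficient reduction $m \equiv 1 \pmod 3$, which is precisely what makes each $\wh{x}$-block survive as one nonzero basis vector rather than vanish — granting the identification $\d(G,H) = \ker\pi$, the remainder is a short direct computation. A purely dimension-based route, comparing $\dim(J(FG)) = 2$ from \Cref{dim jacobson 39} with $\dim(\d(G,H)) = 3m-3$ from \Cref{dim delta 39} inside $\d(G)$, would instead require first establishing $J(FG) + \d(G,H) = \d(G)$, which is more work than the projection argument.
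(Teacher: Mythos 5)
Your proof is correct and is essentially the paper's own argument rendered in homomorphism language: the paper takes $\a \in J(FG)\cap\d(G,H)$ and computes $\a\wh{x}$ in two ways --- writing $\a = \b(x-1)$ gives $\a\wh{x} = \b(\wh{x}-\wh{x}) = 0$, while the coset form of $J(FG) = \anh(s)$ from \Cref{g hat 39} together with $m = 3k+1 \equiv 1 \pmod{3}$ gives $\a\wh{x} = \a$ --- forcing $\a = 0$. Since right multiplication by $\wh{x}$ is exactly your projection $\pi$ followed by the $F$-linear embedding $F[G/H] \hookrightarrow FG$ sending $\bar{g} \mapsto \widehat{gH}$, your kernel-plus-injectivity argument and the paper's two-way computation coincide step for step, resting on the same two facts: $\d(G,H)$ collapses to zero under the coset map, and $m \equiv 1 \pmod{3}$ keeps the $J(FG)$ part from vanishing.
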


\begin{proof}
		Let $\a \in J(FG) \cap \d(G,H)$. By \Cref{gen jacobson 39}, $J(FG) = \langle \wh{G} \rangle$.
	Now we compute $\a . \wh{x}$ in two different ways,
	according to see $\a$ as an element of $J(FG)$ or $\d(G,H)$ separately.
	Before that notice we know $|\langle x \rangle| = Ord_G(x) = m = 1$. So we have:
	\begin{equation*}
	\begin{array}{*3{>{\dps}c}}
		\a \in J(FG) = \langle \wh{G} \rangle
			& \qquad \qquad & \a \in \d(G,H) = \langle x-1 \rangle                 \\
		\a = a. \wh{G}
			& \qquad \qquad & \a = \b (x-1)                                        \\
		\a \wh{x} = a \wh{G} \wh{x} = a \wh{G} |\langle x \rangle|
			& \qquad \qquad & \a \wh{x}= \b (x-1) \wh{x}= \b (x \wh{x} - 1 \wh{x}) \\
		= a. \wh{G} . m = a . \wh{G} = \a
			& \qquad \qquad & = \b . (\wh{x} - \wh{x})= \b . 0 = 0                 \\
	\end{array}
	\end{equation*}
	\smallskip
	
	Therefore, we conclude that:
	\begin{equation}
	\label{alpha 39}
		\a = \a . \wh{x} = 0
	\end{equation}
	
	And hence we have:
	$$J(FG) \cap \d(G,H) = 0$$
\end{proof}

	Now the decomposition can be achieved:

\begin{Prs}
	\label{decomposition 39}
	$\d(G) = J(FG) \oplus \d(G,H)$.
\end{Prs}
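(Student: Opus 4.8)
The plan is to realize $\d(G)$ as the internal direct sum of the two subspaces on the right by establishing that each summand sits inside $\d(G)$, using the disjointness already proved in \Cref{intersection 39} to make the sum direct, and then closing the argument by a dimension count with the values computed in the preceding results.

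First I would check that both summands lie in the augmentation ideal. For $\d(G,H)$ this is immediate: it is generated by the elements $h-1$ with $h \in H$, and each such generator has augmentation $0$, so $\d(G,H) \subseteq \d(G)$. For $J(FG)$ I would use the explicit description from \Cref{g hat 39}, namely $J(FG) = \{\, a^-\wh{x}y^{-1} + a\wh{x} + a^+\wh{x}y \mid a^- + a + a^+ = 0 \,\}$, together with the augmentation map $\epsilon \colon FG \ra F$. Since $\wh{x} = \sum_{i=0}^{3k} x^i$ is a sum of $m = 3k+1$ group elements, we get $\epsilon(\wh{x}) = m = 1$ in $F$ (as $\mathrm{char}\,F = 3$ and $m = 3k+1 \equiv 1$), and likewise $\epsilon(\wh{x}y) = \epsilon(\wh{x}y^{-1}) = 1$. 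Hence a general element of $J(FG)$ maps to $a^- + a + a^+ = 0$, so $J(FG) \subseteq \d(G)$. (Equivalently, \Cref{nil 39} shows $J(FG)$ is nilpotent, and the ring homomorphism $\epsilon$ into the field $F$ must kill every nilpotent element.)

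With both inclusions in place we have $J(FG) + \d(G,H) \subseteq \d(G)$, and \Cref{intersection 39} gives $J(FG) \cap \d(G,H) = 0$, so this sum is in fact direct, i.e. $J(FG) \op \d(G,H) \subseteq \d(G)$. Finally I would compare dimensions: by \Cref{dim jacobson 39}, $\dim(J(FG)) = 2$, and by \Cref{dim delta 39}, $\dim(\d(G,H)) = 3m-3$, so the direct sum has dimension $2 + (3m-3) = 3m-1$. Since \Cref{dim delta 39} also gives $\dim(\d(G)) = 3m-1$, a subspace of $\d(G)$ of the same finite dimension must coincide with it, yielding $\d(G) = J(FG) \op \d(G,H)$.

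The only step that requires any genuine thought is the inclusion $J(FG) \subseteq \d(G)$; once that is settled everything reduces to bookkeeping with the intersection and the dimensions established in \Cref{intersection 39}, \Cref{dim jacobson 39}, and \Cref{dim delta 39}.
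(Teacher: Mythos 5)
Your proof is correct and follows essentially the same route as the paper: trivial intersection (\Cref{intersection 39}) plus the dimension count $2+(3m-3)=3m-1$ from \Cref{dim jacobson 39} and \Cref{dim delta 39}. The only difference is that you explicitly verify the inclusions $J(FG)\subseteq\d(G)$ and $\d(G,H)\subseteq\d(G)$ via the augmentation map, which the paper merely asserts in the sentence preceding \Cref{intersection 39}; that is a worthwhile bit of added rigor but not a different argument.
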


\begin{proof}
		By \Cref{dim jacobson 39} and \Cref{dim delta 39}, we have:
			$$\dim (J(FG)) + \dim (\d(G,H)) = 2 + (3m-3) = (3m-1) = \dim (\d(G))$$
		Now \Cref{intersection 39} together with above equality shows that:
			$$\d(G) = J(FG) \oplus \d(G,H)$$
\end{proof}

		In the next Proposition, we prove that $\d(G,H)$ is a semisimple ring:

\begin{Prs}
\label{semisimple 39}
	$\d(G,H)$ is a semisimple ring.
\end{Prs}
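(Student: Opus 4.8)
The plan is to realise $\d(G,H)$ as a unital direct factor of $FG$ cut out by a central idempotent, and then to read off its Jacobson radical from that of $FG$. The engine of the argument is that the class sum $e \cq \wh{x}$ is a central idempotent. Indeed, since $m = 3k+1$ and $\mathrm{char}\,F = 3$ we have $m = 1$ in $F$, so $\wh{x}\,\wh{x} = |\langle x \rangle|\,\wh{x} = m\,\wh{x} = \wh{x}$; and $\wh{x}$ is central because $x^{\pm 1}\wh{x} = \wh{x} = \wh{x}x^{\pm 1}$ while conjugation by $y$ only permutes the powers of $x$ (here $t$ is a unit modulo $m$, as $t^3 \overset{m}{\equiv} 1$), giving $y^{-1}\wh{x}\,y = \wh{x}$. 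Consequently $FG = FGe \op FG(1-e)$ is a direct product of two rings, each a two-sided ideal of $FG$ with identity $e$ and $1-e$ respectively.

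Next I would identify $\d(G,H) = FG(1-e)$. For the inclusion $FG(1-e) \subseteq \d(G,H)$ it is enough to note that each $g(1-e)$ has vanishing $H$-augmentation: under $FG \ra F[G/H]$ one has $ge \mapsto gH$ (again because $m = 1$ in $F$), so $g(1-e) \mapsto gH - gH = 0$. Equality then follows on dimensions, since $\dim(FGe) = 3$ forces $\dim(FG(1-e)) = 3m - 3$, which equals $\dim(\d(G,H))$ by \Cref{dim delta 39}. In particular $\d(G,H)$ is a ring with identity $1-e$.

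The decisive step is to show $J(\d(G,H)) = 0$. By \Cref{gen jacobson 39} every element of $J(FG)$ is an $F$-linear combination of $\wh{x}y^{-1},\wh{x},\wh{x}y$, each of which is a multiple of $\wh{x}$; hence $J(FG) \subseteq FG\,\wh{x} = FGe$ and therefore $J(FG) \cap FG(1-e) = 0$. Because $e$ is central, the radical of each direct factor is the corresponding component of $J(FG)$, so $J(FG(1-e)) = J(FG)(1-e) = 0$. Equivalently, one may quote the standard fact $J(I) = I \cap J(R)$ for a two-sided ideal $I \unlhd R$ and feed it \Cref{intersection 39} directly. Since $\d(G,H) = FG(1-e)$ is finite-dimensional over $F$, it is Artinian, and an Artinian ring with zero Jacobson radical is semisimple; this yields the assertion.

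The one point demanding care — the main obstacle — is the identification $\d(G,H) = FG(1-e)$, and in particular checking that this ideal is \emph{unital} with unit $1-e$. Unitality is what lets me pass cleanly from $J(\d(G,H)) = 0$ to semisimplicity via the Artinian hypothesis, avoiding the pathologies of radicals of rings without identity. Once the central idempotent $e = \wh{x}$ is secured, every remaining verification is a short computation.
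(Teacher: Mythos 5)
Your proof is correct, and it takes a genuinely different route from the paper's. The paper argues by embedding: from \Cref{decomposition 39} it writes $\d(G,H) \cong \d(G)/J(FG) \subseteq FG/J(FG)$, shows that $FG/J(FG)$ is Artinian with zero radical and hence semisimple, and then passes to $\d(G,H)$ by a closure property of semisimplicity under subrings. You instead build the ring-direct decomposition $FG = FG\wh{x} \oplus FG(1-\wh{x})$ from the central idempotent $e = \wh{x}$ (idempotent precisely because $m = 3k+1$ makes $|\langle x\rangle| = 1$ in $F$), identify $\d(G,H) = FG(1-e)$ by your augmentation-plus-dimension argument using \Cref{dim delta 39}, note from \Cref{gen jacobson 39} that $J(FG) \subseteq FG\wh{x}$, and conclude that the unital Artinian ring $FG(1-e)$ has zero radical. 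Your route buys two things. First, it makes $\d(G,H)$ explicitly unital with identity $1-\wh{x}$, which is exactly what legitimizes the inference from ``Artinian with $J = 0$'' to ``semisimple''; the paper's closing step (subrings of semisimple rings are semisimple) is false for arbitrary subrings --- consider $\z \subset \mathbb{Q}$ --- and is salvageable only because $\d(G,H)$ is in fact an ideal, hence a direct summand, which is precisely the structure your argument exhibits rather than leaves implicit. Second, your decomposition is the isomorphism $FG \cong F(G/H) \oplus \d(G,H)$ that the paper later imports from the literature in the proof of \Cref{main 39}, so your proof of \Cref{semisimple 39} delivers that later ingredient for free. The trade-off is that the paper reuses its already-proved \Cref{decomposition 39} and avoids idempotent computations, whereas you bypass \Cref{decomposition 39} entirely, needing only \Cref{g hat 39}, \Cref{anh equal 39} and \Cref{dim delta 39}.
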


\begin{proof}
		By \Cref{decomposition 39}, we have $\d(G,H) = \d(G)/J(FG) \subseteq FG/J(FG)$.
	From \cite[Theorem 6.6.1 on p.~214]{Milies2002}
	group ring of a field over a finite group is Artinian,
	so $FG$ is an Artinian ring, and \cite[Lemma 2.4.9 on p.~87]{Milies2002},
	implies its quotient ring, $FG/J(FG)$, is an Artinian ring too.
	Also from \cite[Lemma 2.7.5 on p.~107]{Milies2002} we know that ${J(FG/J(FG)) = 0}$. Now by using
	\cite[Theorem 2.7.16 on p.~111]{Milies2002} we can explore that $FG/J(FG)$ is semisimple,
	and by \cite[Proposition 2.5.2 on p.~91]{Milies2002}, all of its subrings are semisimple too.
	So $\d(G,H)$ is semisimple. \\
\end{proof}

		By Artin-Wedderburn Theorem, $\d(G,H)$ decomposes to its simple components
	that are division rings of matrices over extensions of $F$.
	Now we need to know their numbers and dimensions.
	First we show that the center of $\d(G,H)$ is included in the center of $FG$:

\begin{Prs}
\label{center inclusion 39}
	$Z(\d(G,H)) \subseteq Z(FG)$
\end{Prs}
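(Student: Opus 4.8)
The plan is to show the two-sided containment indirectly by understanding what the center of $\d(G,H)$ consists of and verifying each such central element commutes with all of $FG$. Since $\d(G,H)$ is a semisimple ring (by \Cref{semisimple 39}) sitting inside $FG$, its center $Z(\d(G,H))$ is spanned by primitive central idempotents of the simple components; but rather than chase idempotents, I would work directly with the linear structure we already have. The decomposition $\d(G) = J(FG) \oplus \d(G,H)$ from \Cref{decomposition 39} identifies $\d(G,H)$ with $\d(G)/J(FG)$, so a central element of $\d(G,H)$ is (a coset representative of) something central modulo the radical.

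\textbf{Key steps.} First I would take an arbitrary $z \in Z(\d(G,H))$ and recall that $\d(G,H) = \langle x - 1 \rangle$ is the augmentation ideal relative to the normal subgroup $H = \langle x \rangle$; concretely its elements are $F$-combinations of $x^i - 1$ and $(x^i-1)y^{\pm 1}$. The claim $Z(\d(G,H)) \subseteq Z(FG)$ amounts to showing that $z$ commutes not just with $\d(G,H)$ but with the two extra group-ring directions that $\d(G,H)$ misses — namely the components along $\wh{x}$, $\wh{x}y$, $\wh{x}y^{-1}$, i.e.\ the radical direction from \Cref{dim jacobson 39}. The natural move is to show that $z$ automatically commutes with $J(FG)$ as well, because then $z$ commutes with all of $\d(G) = J(FG) \oplus \d(G,H)$, and since $FG = F \cdot 1 \oplus \d(G)$ with the identity central, $z$ would lie in $Z(FG)$.

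\textbf{Carrying it out,} I would argue that $z \cdot J(FG) = J(FG) \cdot z = 0$: by \Cref{anh equal 39} we have $J(FG) = \anh(s) = \langle \wh{G} \rangle$ generated by $\wh{G}$, and multiplying $\wh{G}$ by any element $x^i - 1$ of $\d(G,H)$ kills it, since $x^i \wh{G} = \wh{G}$ (as $\wh{G}$ is fixed under left and right multiplication by group elements). Hence $\d(G,H) \cdot \wh{G} = 0 = \wh{G} \cdot \d(G,H)$, so every element of $\d(G,H)$ — in particular $z$ — annihilates $J(FG)$ from both sides. This reduces the problem to checking that $z$ commutes with any $g \in G$ once we know it commutes with $\d(G,H)$ and annihilates $J(FG)$; writing $g = (g-1) + 1$ and noting $g - 1 \in \d(G)= J(FG)\oplus \d(G,H)$, I split $g-1$ into its radical part (annihilated by $z$ on both sides, so commuting trivially) and its $\d(G,H)$-part (commuting by centrality of $z$ in $\d(G,H)$), which gives $zg = gz$.

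\textbf{The main obstacle} I expect is the bookkeeping in the last reduction: the decomposition of $g-1$ into a $J(FG)$-component and a $\d(G,H)$-component is an internal direct-sum decomposition, and I must be careful that commuting separately with each summand really yields $zg = gz$ — this uses crucially that $z$ annihilates the radical part \emph{two-sidedly} (not merely commutes with it), which is exactly what the $\anh(s)$ description buys us. Once that two-sided annihilation is in hand the argument is forced, so the delicate point is simply marshaling \Cref{anh equal 39}, \Cref{decomposition 39}, and the identity $\wh{G}\,g = \wh{G}$ in the correct order.
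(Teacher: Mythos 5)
Your strategy is genuinely different from the paper's: the paper checks that a central element $\a$ of $\d(G,H)$ commutes with the two generators of $G$ (with $x$ because $x-1\in\d(G,H)$, and with $y$ by showing $\a y-y\a\in\anh(x-1)=FG\wh{x}$ and then computing $(\a y-y\a)\wh{x}$ in two ways), whereas you reduce everything to the decomposition $\d(G)=J(FG)\op\d(G,H)$ of \Cref{decomposition 39} together with $FG=F\cdot 1\op\d(G)$. That reduction is a perfectly good plan, and two-sided annihilation of the radical is the right mechanism. However, your key step contains a genuine error: the identification $J(FG)=\anh(s)=\langle\wh{G}\rangle$ is false. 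Since $g\wh{G}=\wh{G}g=\wh{G}$ for every $g\in G$, the ideal (equivalently, the $F$-span) generated by $\wh{G}$ is just $F\wh{G}$, which is one-dimensional, while $\dim(J(FG))=2$ by \Cref{dim jacobson 39}; for instance $\wh{x}y-\wh{x}$ lies in $\anh(s)$ but is not a scalar multiple of $\wh{G}=\wh{x}+\wh{x}y+\wh{x}y^{-1}$. Consequently your computation proves only that $z$ kills $F\wh{G}$, not that it kills all of $J(FG)$, and your final reduction (splitting $g-1$ into a radical part and a $\d(G,H)$ part) needs the latter.

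The gap is repairable inside your own framework. By \Cref{g hat 39} and \Cref{anh equal 39}, every element of $J(FG)=\anh(s)$ has the form $a^-\wh{x}y^{-1}+a\wh{x}+a^+\wh{x}y=(a^-y^{-1}+a+a^+y)\,\wh{x}=\wh{x}\,(a^-y^{-1}+a+a^+y)$, where the last equality holds because conjugation by $y$ permutes $\langle x\rangle$, so $\wh{x}$ commutes with $y^{\pm1}$. Moreover $\d(G,H)\wh{x}=\wh{x}\d(G,H)=0$: the ideal $\d(G,H)$ is spanned over $F$ by the elements $g(x^i-1)$ (equivalently by $(x^i-1)g$), and $(x^i-1)\wh{x}=\wh{x}(x^i-1)=0$; this is exactly the computation that yields \Cref{alpha 39} in the paper. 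Combining these two facts gives $z\cdot J(FG)=J(FG)\cdot z=0$ for every $z\in\d(G,H)$, after which the rest of your argument goes through verbatim. (You are in good company: the paper's own proof of \Cref{intersection 39} also writes $J(FG)=\langle\wh{G}\rangle$, and it too should be read via the $\wh{x}$-factorization of $\anh(s)$ rather than via $\wh{G}$.)
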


\begin{proof}
		For the proof of this proposition, we need show that each element of $Z(FG)$
	must commute with all of elements of $FG$.
	Since $F$ is commutative and $G$ is generated by $x$ and $y$,
	so it suffices to show they commute with $x$ and $y$. Let $\a \in Z(\d(G,H))$,
	so it commutes with $x-1$ as it is in $\d(G,H)$:
	\begin{equation*}
	\begin{array}{*1{>{\dps}c}}
		\a . (x-1)   = (x-1) . \a      \\
		\a .  x - \a =  x    . \a - \a \\
		\a .  x      =  x    . \a      \\
	\end{array}
	\end{equation*}
	\par
	
		So $\a$ commutes with $x$. Now we show that $\a$ also commutes with $y$.
	First we show that $\a y - y \a$ is in $\anh(x-1)$. Notice we know that
	$(x-1)y = y(x^{-1}-1) \in \d(G,H)$, so:
	\begin{equation*}
	\begin{array}{*3{>{\dps}c}}
		(x-1)y \in \d(G,H)             & \qquad \qquad &  y(x-1) \in \d(G,H)             \\
  		\a . (x-1) . y = (x-1) . y .\a & \qquad \qquad &  \a . y . (x-1) = y. (x-1) . \a \\
		(x-1) . \a y = (x-1) . y \a    & \qquad \qquad &  \a y . (x-1) = y \a . (x-1)    \\
		(x-1) (\a y - y \a) = 0        & \qquad \qquad &  (\a y - y \a) (x-1) = 0        \\
	\end{array}
	\end{equation*}
	\smallskip
	
		So $(\a y - y \a) \in \anh (x-1)$ and by \cite[Lemma 3.4.3 on p.~139]{Milies2002}
	we know that ${\anh (x-1) = \anh(\d(G,H)) = FG \wh{x}}$.
	Now we compute $(\a y - y \a) . \wh{x}$ in two different ways,
	directly itself or see $(\a y - y \a)$ as an element of $FG . \wh{x}$ separately.
	Before that notice $\a \in Z(\d(G,H)) \subseteq \d(G,H)$, so by \Cref{alpha 39} $\a . \wh{x} = 0$.
	So we have:
	\begin{equation*}
	\begin{array}{*3{>{\dps}c}}
		(\a y - y \a) . \wh{x} = \a . y . \wh{x} - y . \a . \wh{x} = & \qquad \qquad &
			(\a y - y \a) . \wh{x} = \b . \wh{x} . \wh{x} =				  \\
		\a \wh{x} . y - y . \a \wh{x} = 0 . y - y . 0 = 0			 & \qquad \qquad &
			\b . \wh{x} . |\langle x \rangle| = \b \wh{x} = (\a y - y \a) \\
	\end{array}
	\end{equation*}
	\par
	
		Hence $\a y - y \a = (\a y - y \a) . \wh{x} = 0$, thus $\a y = y \a$,
	that means $\a$ also commutes with $y$ and therefore:
			$$Z(\d(G,H)) \subseteq Z(FG)$$
\end{proof}

	In the next proposition, we obtain the exact structure of $Z(\d(G,H))$:

\begin{Prs}
\label{center 39}
	$Z(\d(G,H)) = \langle \wh{\c}_1 , \dt , \wh{\c}_k \rangle$
\end{Prs}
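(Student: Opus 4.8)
The plan is to pin down $Z(\d(G,H))$ by sandwiching it between $\d(G,H)$ and the centre of the full group ring. Recall the standard fact that the class sums $\wh{\c}_0, \wh{\c}_1, \dt, \wh{\c}_k, \wh{\c}_+, \wh{\c}_-$, one per conjugacy class as listed in \Cref{conjugacy 39}, form an $F$-basis of $Z(FG)$. By \Cref{center inclusion 39} we already have $Z(\d(G,H)) \subseteq Z(FG)$, and of course $Z(\d(G,H)) \subseteq \d(G,H)$; hence $Z(\d(G,H)) \subseteq Z(FG) \cap \d(G,H)$, and it will suffice to show that this intersection is exactly $\langle \wh{\c}_1, \dt, \wh{\c}_k \rangle$ and that each $\wh{\c}_i$ is in turn central in $\d(G,H)$.

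For the intersection I would use that $\d(G,H)$ is the kernel of the projection $FG \ra F[G/H]$, so an element lies in $\d(G,H)$ precisely when its coefficients sum to zero on each of the three cosets $\langle x\rangle$, $\langle x\rangle y$ and $\langle x\rangle y^{-1}$ of $H=\langle x\rangle$. Writing a central element as
$$ z = c_0\,\wh{\c}_0 + \sum_{i=1}^k c_i\,\wh{\c}_i + c_+\,\wh{\c}_+ + c_-\,\wh{\c}_-, $$
I would impose the three coset conditions. On the coset $\langle x\rangle$ only $\wh{\c}_0 = 1$ and the $\wh{\c}_i$ contribute; since each $\wh{\c}_i$ has three terms its coefficient sum there is $3c_i = 0$ in characteristic $3$, so the condition reads $c_0 = 0$. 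On the cosets $\langle x\rangle y$ and $\langle x\rangle y^{-1}$ the only contributors are $\wh{\c}_+ = \wh{x}y$ and $\wh{\c}_- = \wh{x}y^{-1}$, with coefficient sums $m\,c_+$ and $m\,c_-$; as $m = 3k+1 \equiv 1 \pmod 3$ these force $c_+ = c_- = 0$. Hence $z \in \langle \wh{\c}_1, \dt, \wh{\c}_k\rangle$, giving $Z(FG) \cap \d(G,H) \subseteq \langle \wh{\c}_1, \dt, \wh{\c}_k\rangle$.

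For the reverse inclusion, each $\wh{\c}_i$ with $1 \le i \le k$ is a class sum and so lies in $Z(FG)$, commuting with every element of $FG$ and in particular with all of $\d(G,H)$; moreover its three terms all sit in the coset $\langle x\rangle$ with coefficient sum $3 = 0$, so $\wh{\c}_i \in \d(G,H)$. Thus $\wh{\c}_i \in Z(\d(G,H))$ for every $i$, and combining the two inclusions yields $Z(\d(G,H)) = \langle \wh{\c}_1, \dt, \wh{\c}_k\rangle$. I expect the only real care to lie in the characteristic-$3$ bookkeeping of the coset sums: it is exactly the congruence $3 \equiv 0$ (equivalently $m \equiv 1$) that simultaneously places the $\wh{\c}_i$ inside $\d(G,H)$ and keeps $\wh{\c}_0, \wh{\c}_+, \wh{\c}_-$ out of it. If one prefers to avoid the quotient $F[G/H]$, the same conclusion follows by arguing directly from the description $\d(G,H) = \langle x-1\rangle$ used earlier, multiplying a putative central combination by $\wh{x}$ as in \Cref{intersection 39} to detect the forbidden components.
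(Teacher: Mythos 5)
Your proof is correct, but it takes a genuinely different route from the paper's. Both arguments open the same way: \Cref{center inclusion 39} plus the class-sum basis of $Z(FG)$ sandwich $Z(\d(G,H))$ inside the span of $\wh{\c}_0, \wh{\c}_1, \dt, \wh{\c}_k, \wh{\c}_+, \wh{\c}_-$. From there the paper works multiplicatively: it invokes \Cref{alpha 39} to get $\a\wh{x} = 0$, computes $\a\wh{x} = r_-\wh{x}y^{-1} + r_0\wh{x} + r_+\wh{x}y$ (the three-element class sums dying because $\wh{\c}_i\wh{x} = 3\wh{x} = 0$), and reads off $r_0 = r_+ = r_- = 0$; then, for the reverse inclusion, it merely asserts that a $\b$ with $\a = \b(x-1)$ can be found ``by solving a system of linear equations''. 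You instead characterize $\d(G,H)$ as the kernel of the projection $FG \ra F[G/H]$ (legitimate, since $H = \langle x\rangle$ is normal; this is the same result of \cite[Proposition 3.3.3]{Milies2002} that the paper already uses to justify \Cref{dim delta 39}), so membership in $\d(G,H)$ becomes three coset-sum conditions, and the characteristic-$3$ arithmetic ($3 \equiv 0$, $m \equiv 1$) settles both inclusions in one stroke. Your route buys two things the paper's version lacks: the reverse inclusion is actually proved rather than waved at (no $\b$ needs to be produced, since the vanishing coset sums place each $\wh{\c}_i$ in the kernel directly), and the step $Z(FG) \cap \d(G,H) \subseteq Z(\d(G,H))$, which the paper leaves implicit, is stated and justified via centrality of class sums in all of $FG$. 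What the paper's route buys is self-containment: it reuses only the $\wh{x}$-multiplication identities already established around \Cref{x hat y 39} and \Cref{intersection 39} and never needs the quotient-map description of the augmentation ideal --- and, as you note yourself at the end, the two mechanisms are interchangeable here.
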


\begin{proof}
		Let $\a\in Z(\d(G,H))$, from \cite[Theorem 3.6.2 on p.~151]{Milies2002} we know that
	${Z(FG)=\langle \wh{\c}_i \ | \ \forall i \rangle}$,
	so for center of augmentation ideal we have
	${Z(\d(G,H)) \subseteq \langle \wh{\c}_i \ | \ \forall i \rangle}$,
	by using \Cref{center inclusion 39}. So $\a = \sum_i r_i \wh{\c}_i$.
	By \Cref{alpha 39}, $\a . \wh{x} = 0$ and notice that
	$x^i \wh{x} = \wh{x}$, so ${(x^{i/t} + x^i + x^{it}) \wh{x} = 3 \wh{x} = 0}$. Hence:
	\begin{equation}
	\label{alpha x}
	\begin{split}	
		0 & = \a \wh{x} = \sum_i r_i \wh{\c}_i \wh{x} = r_0 \wh{\c}_0 \wh{x}
			+ (\sum_{i=1}^k r_i \wh{\c}_i \wh{x}) + r_- \wh{\c}_- \wh{x} + r_+ \wh{\c}_+ \wh{x} \\
	  	  & = {r_0 . 1 . \wh{x} + (\sum_{i=1}^k r_i (x^{i/t} + x^i + x^{it}) \wh{x})
			+  r_- \wh{x}y^{-1} \wh{x} + r_+ \wh{x}y \wh{x}}									\\
		  & =  r_0 \wh{x} + 0	+ r_- \wh{x}y^{-1} + r_+ \wh{x}y \cw{\sum_{i=1}^k}			\\
		  & =  r_- \wh{x}y^{-1} + r_0 \wh{x}	   + r_+ \wh{x}y \cw{\sum_{i=1}^k}			\\
	\end{split}
	\end{equation}
	\par
	
		Since left side of \Cref{alpha x} is zero, so right side coefficients should be zero too,
	so ${r_{-3} = r_0 = r_{+3} = 0}$, hence ${\a = \sum_{i=1}^k r_i \wh{\c}_i}$, that means
	${Z(\d(G,H)) \subseteq \langle \wh{\c}_1 , \dt , \wh{\c}_k \rangle}$.
	Now it suffices to show that all of these types of elements are included in $\d(G,H)$.
	We must show that there is a $\b$ that $\a = \b (x-1)$.
	It is straightforward to find $\b$'s coefficients by solving a system of linear equations.
	So $\a \in \d(G,H)$, and therefore:
			$$Z(\d(G,H)) = \langle \wh{\c}_1 , \dt , \wh{\c}_k \rangle$$
\end{proof}
\vspace{31pt}

Now the dimension of the center of $\d(G,H)$ can be computed:

\begin{Crl}
\label{dim center 39}
	$\dim ( Z( \d(G,H) ) ) = k$
\end{Crl}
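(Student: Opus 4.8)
The plan is to read the result straight off \Cref{center 39}, which has already identified the center as $Z(\d(G,H)) = \langle \wh{\c}_1, \dt, \wh{\c}_k \rangle$. Thus the center is spanned over $F$ by exactly $k$ vectors, so immediately $\dim(Z(\d(G,H))) \leqslant k$. The only thing left is to confirm that these $k$ class sums are $F$-linearly independent, which would pin the dimension down to exactly $k$.

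For the independence step I would invoke \Cref{conjugacy 39}: each class $\c_i$ (for $1 \leqslant i \leqslant k$) is the three-element set $\{x^{j_i}, x^{j_i t}, x^{j_i t^2}\}$, and distinct conjugacy classes of $G$ are disjoint. Consequently the supports $\sup(\wh{\c}_i) = \c_i$ are pairwise disjoint subsets of $G$. Since the group elements of $G$ constitute the standard $F$-basis of $FG$, any family of group-ring elements with pairwise disjoint supports is automatically linearly independent over $F$; in particular a combination $\sum_{i=1}^k r_i \wh{\c}_i$ can vanish only when every $r_i = 0$.

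Combining the two observations, $\{\wh{\c}_1, \dt, \wh{\c}_k\}$ is an $F$-basis of $Z(\d(G,H))$, whence $\dim(Z(\d(G,H))) = k$. I do not expect a genuine obstacle here: the substantive content was already supplied by \Cref{center 39}, and this corollary reduces to a short dimension count once the disjointness of the conjugacy classes $\c_1, \dt, \c_k$ is noted.
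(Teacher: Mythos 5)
Your proof is correct and takes essentially the same approach as the paper: both read the dimension off from \Cref{center 39}, which exhibits $Z(\d(G,H))$ as the span of the $k$ class sums $\wh{\c}_1,\dt,\wh{\c}_k$. The only difference is that the paper treats the linear independence of these class sums as immediate, whereas you spell out the (correct) justification that distinct conjugacy classes have disjoint supports in the group basis of $FG$.
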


\begin{proof}
		By \Cref{center 39},we know that $Z(\d(G,H)) = \langle \wh{\c}_1 , \dt , \wh{\c}_k \rangle$, so:
			$$\dim ( Z( \d(G,H) ) ) = k$$
\end{proof}

		We first solve the problem for group $G=T_{39}$ in the following section.

\section{\bf{\larger[1]Unit Group of $\f_{3^n} T_{39}$}}\label{sec4}

		Let $T_{39} = \langle x,y\,|\,x^{13}=y^3=1,\,x^y=x^3\rangle$,
	$C_n$ be cyclic group of order $n$, $M_n(R)$ be the ring of the square matrices
	of the order $n$ on the ring $R$ and $GL_n(R)$ be its unit group.
	Also $R^n$ be the direct sum of $n$ copy of the ring $R$, \ie $R^n = \op_{i=1}^n R$
	and $F_n$ be the extension of the finite field $F$ of the order $n$ that is $[F_n:F] = n$.
	So we have:

\begin{The}
	\label{main 39}
		Let $G  = T_{39}$ and $F = \f_{3^n}$.
	Then the structure of $\u(FG)$ can be obtained as follows: \medskip
			$$\u(FG)=C_3^{2n} \ti C_{3^n-1} \ti GL_3(F)^4$$
\end{The}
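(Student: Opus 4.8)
The plan is to split $FG$ by a single central idempotent and analyse the two factors independently. First I would observe that $e \cq \wh{x}$ is a central idempotent: since $|\langle x\rangle| = 13 \equiv 1 \pmod 3$ we get $\wh{x}^2 = 13\,\wh{x} = \wh{x}$, and $\wh{x}$ commutes with $y$ because conjugation by $y$ only permutes the powers of $x$. Hence $FG = e\,FG \op (1-e)FG$ as a direct product of two-sided ideals. By \Cref{g hat 39} every element of $J(FG) = \anh(s)$ is an $F$-combination of $\wh{x}y^{-1},\wh{x},\wh{x}y$, so $J(FG) \subseteq e\,FG$; consequently $(1-e)FG$ is semisimple and $e\,FG$ carries the whole radical. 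A dimension count gives $\dim(e\,FG)=3$ and $\dim((1-e)FG)=3m-3=36$. Moreover, since $e$ is central and $\wh{x}(x-1)=0$, we get $e\,\d(G,H)=0$, so $\d(G,H)\subseteq(1-e)FG$, and equality of dimensions (\Cref{dim delta 39}) forces $(1-e)FG=\d(G,H)$.

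Next I would compute $\u(e\,FG)$. The ideal $e\,FG$ is spanned by $\wh{x},\wh{x}y,\wh{x}y^{-1}$ with $(\wh{x}y)^3=\wh{x}y^3=e$, so $e\,FG\cong FC_3=F[z]/(z^3-1)$, a commutative local $F$-algebra whose maximal ideal is $J(FG)$ of dimension $2$ (\Cref{dim jacobson 39}) and whose residue field is $F$. For $u=e+\a$ with $\a\in J(FG)$, the characteristic-$3$ identity $(e+\a)^3=e+\a^3$ together with $\anh^3(s)=0$ (\Cref{nil 39}) gives $u^3=e$; thus $1+J(FG)$ is elementary abelian of order $|F|^2=3^{2n}$, i.e.\ $C_3^{2n}$. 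Splitting off the residue field yields $\u(e\,FG)\cong(1+J(FG))\ti F^\times\cong C_3^{2n}\ti C_{3^n-1}$.

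The main work is to identify $(1-e)FG=\d(G,H)$. By \Cref{semisimple 39} it is semisimple, so Artin--Wedderburn (with finite division rings being fields by Wedderburn) gives $\d(G,H)\cong\prod_i M_{n_i}(D_i)$ with the $D_i\supseteq F$ finite fields; here $\sum_i[D_i:F]=\dim(Z(\d(G,H)))=k=4$ by \Cref{dim center 39}, and $\sum_i n_i^2[D_i:F]=36$. To pin these invariants down I would pass to the representation theory of $T_{39}$: the linear characters of $\langle x\rangle\cong C_{13}$ split, under the $C_3$-action $x\mapsto x^3$, into the trivial character and $k=4$ orbits $\{j,3j,9j\}$ of size $3$ (matching the classes $\c_1,\dots,\c_4$ of \Cref{conjugacy 39}). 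Each nontrivial orbit yields $\mathrm{Ind}_{C_{13}}^{G}(\lambda_j)$, absolutely irreducible of degree $3$ because the orbit has full size, while all three linear characters of $G$ collapse modulo $3$ into the single trivial one sitting in $e\,FG$. Since $3$ has order $3$ modulo $13$, the Frobenius $\zeta\mapsto\zeta^3$ permutes each orbit within itself, so every such degree-$3$ representation is Frobenius-stable with $\f_3$-valued Brauer character; as finite fields have trivial Schur index it is already realizable over $\f_3$. Hence each component is $M_3(F)$ and there are exactly four, giving $\d(G,H)\cong M_3(F)^4$ and $\u(\d(G,H))\cong GL_3(F)^4$.

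Finally I would assemble the pieces: from $FG=e\,FG\ti(1-e)FG$,
$$\u(FG)\cong\u(e\,FG)\ti\u(\d(G,H))\cong\big(C_3^{2n}\ti C_{3^n-1}\big)\ti GL_3(F)^4,$$
which is the asserted structure. The delicate step is the third paragraph: showing each Wedderburn component is exactly $M_3(F)$ — both that the matrix degree is $3$ and that no proper field extension of $F$ intervenes — rests on the arithmetic fact that $\ord_{13}(3)=3$ makes each Galois orbit of $C_{13}$-characters a full conjugacy orbit, so that the four degree-$3$ representations are individually defined already over the prime field $\f_3$.
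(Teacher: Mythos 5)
Your proof is correct, and its outer frame matches the paper's: both split $FG$ into a copy of $FC_3$ and the augmentation ideal $\d(G,H)$ and then treat the two unit groups separately; indeed your idempotent $e=\wh{x}$ (using $13\equiv 1\pmod 3$) is exactly the standard idempotent attached to $H=\langle x\rangle$, so $e\,FG\op(1-e)FG$ is the same decomposition $F(G/H)\op\d(G,H)$ that the paper quotes from the literature. The genuine divergence is in identifying $\d(G,H)\cong M_3(F)^4$. The paper argues through the center: for $\a=\sum_i r_i\wh{\c}_i\in Z(\d(G,H))$, characteristic $3$ gives $\wh{\c}_i^{\,3}=\wh{\c}_i$, hence $\a^{3^n}=\a$, so every Wedderburn component has center exactly $F$, and \Cref{dim center 39} gives four components. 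You instead construct the components representation-theoretically: the four orbits of nontrivial characters of $C_{13}$ under $\lambda\mapsto\lambda^{3}$ each induce an absolutely irreducible degree-$3$ module, Frobenius-stable and, Schur indices over finite fields being trivial, defined over $\f_{3}$. Your heavier route is actually tighter at the one place where the paper is loose: knowing only ``four components with center $F$ and total dimension $36$'' does not force all matrix degrees to equal $3$, since $1+1+9+25=36$ is another partition into four squares; one must still rule out one-dimensional components of $\d(G,H)$ (e.g.\ every algebra map $FG\to F$ restricts to a group homomorphism $G\to F^{\times}$, which is trivial because $G/H\cong C_3$ and $3\nmid 3^{n}-1$, and the augmentation kills $\d(G,H)$), whereas your four explicit degree-$3$ representations already exhaust all $36$ dimensions, closing that gap for free. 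A last point in your favor: your direct computation $\u(e\,FG)\cong C_3^{2n}\ti C_{3^n-1}$, via $(e+\a)^{3}=e$ for $\a\in J(FG)$ and $J(FG)^{3}=0$, confirms the exponent $2n$ in the theorem statement; note that the final display of the paper's own proof misprints it as $C_3^{n}$.
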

\par

\begin{proof}
		Let $\a \in Z(\d(G,H))$. From \Cref{center 39}, we know that $\a$ can be written as
	${\a = r_1\wh{\c}_1+r_2\wh{\c}_2+r_3\wh{\c}_3+r_4\wh{\c}_4}$.
	Since $char(F) = 3$, we have:
	\begin{equation*}
	\begin{alignedat}{5}
		& \a	  && = r_1		\wh{\c}_1	&& + r_2	  \wh{\c}_2
				  && + r_3		\wh{\c}_3	&& + r_4	  \wh{\c}_4   \\
		& \a^3	  && = r_1^3	\wh{\c}_1^3 && + r_2^3	  \wh{\c}_2^3
				  && + r_3^3	\wh{\c}_3^3 && + r_4^3	  \wh{\c}_4^3 \\
		& \a^3	  && = r_1^3	\wh{\c}_1	&& + r_2^3	  \wh{\c}_2
				  && + r_3^3	\wh{\c}_3	&& + r_4^3	  \wh{\c}_4   \\
		& \a^{3n} && = r_1^{3n} \wh{\c}_1	&& + r_2^{3n} \wh{\c}_2
				  && + r_3^{3n} \wh{\c}_3	&& + r_4^{3n} \wh{\c}_4   \\
		& \a^{3n} && = r_1		\wh{\c}_1	&& + r_2	  \wh{\c}_2
				  && + r_3		\wh{\c}_3	&& + r_4	  \wh{\c}_4   \\
	\end{alignedat}
	\end{equation*}
	\par
	
		Since $|F| = 3^n$, we know $r_i^{3^n} = r_i$, so $\a^{3^n} = \a$. Therefore, we have:
	$$\begin{array}{*2l}
		\d(G,H) \cong M_3(F)^4 \\
	  \end{array}$$
	\par
	
		By \cite[Proposition 3.6.7 on p.~153]{Milies2002}, $FG \cong F(G/H) \oplus \d(G,H)$,
	so for unit group $\u (FG) \cong \u (F(C_3)) \times \u (\d(G,H))$.
%	The structure of $\u (F(C_3))$ was described in \cite[Theorem 3.5 on p.~3316]{Gildea2010C3D6}.
	Therefore, we have:
		$$\u (FG) = C_3^n \ti C_{3^n-1} \ti GL_3(F)^4$$
\end{proof}

\section{\bf{\larger[1]Unit Group of $\f_{3^n} T_{3m}$}}\label{sec5}
		For generalization, it suffices to do the same, as has been done in the previous section,
	for given $m$. In this regard, we write a \scalerel*{\includegraphics{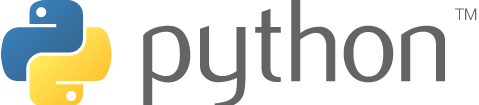}}{Qq} code,
	which does it for any $m$. {\href{https://github.com/A30hK0ch0l0/Unit-Group-of-F_3q-T_3n}
	{This code is available here in} \scalerel*{\includegraphics{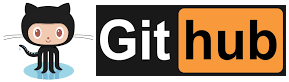}}{Qq},
	that takes $k$ and $t$ and returns unit group just like \Cref{main 39}.
	Everyone without Python 3 compiler on their computer, can executes it online.
	For example, \href{https://www.tutorialspoint.com/execute_python3_online.php}
	{here is an online Python 3 compiler}. \\

{\small

\bibliographystyle{spmpsci}		% mathematics and physical sciences
\bibliography{References}		% name of BibTeX data base

\bigskip

{\small
{\em Author' addresse}:
{\em Ali Ashja'}, \href{https://www.modares.ac.ir/en}{Tarbiat Modares University, Tehran, Iran.} \par
e-mail: \texttt{\href{mailto:Ali.Ashja@Modares.ac.ir}{Ali.Ashja@\allowbreak Modares.ac.ir},
				\href{mailto:AliAshja@Gmail.com}	 {AliAshja@\allowbreak Gmail.com}	 }.
}

\end{document}